\documentclass[12pt]{amsart}

\usepackage[margin=1in]{geometry}
\usepackage{amsmath,amssymb,amsthm,mathtools}
\usepackage[colorlinks=true,linkcolor=blue,citecolor=blue,urlcolor=blue]{hyperref}
\usepackage[capitalise,nameinlink]{cleveref}

\newtheorem{theorem}{Theorem}[section]
\newtheorem{proposition}[theorem]{Proposition}
\newtheorem{lemma}[theorem]{Lemma}
\newtheorem{corollary}[theorem]{Corollary}

\theoremstyle{definition}

\newtheorem{remark}[theorem]{Remark}

\newcommand{\PP}{\mathbb{P}}
\newcommand{\Aff}{\mathbb{A}}
\newcommand{\NN}{\mathbb{N}}

\newcommand{\mm}{\mathfrak m}
\newcommand{\ord}{\operatorname{ord}}
\newcommand{\Supp}{\operatorname{Supp}}

\title[Demailly's inequality in positive characteristic]
{Demailly's Inequality for Finite Sets of Points\\in Positive Characteristic}

\author{Piotr Pokora and Tomasz Szemberg}
\date{\today}

\begin{document}

\begin{abstract}
Let $k$ be an algebraically closed field of characteristic $p>0$, $n\ge1$, and let
$X\subset \PP^n_k$ be a finite nonempty set of distinct points with the defining
ideal $I=I(X)$. We give a proof of 
Demailly's inequality in positive characteristic
$$
 \widehat\alpha(I)\ge
 \frac{\alpha(I^{(m)})+n-1}{m+n-1}
 \qquad (m\ge 1).
$$
The argument is based on the Frobenius--Hasse derivative method used in this context by
H\`a and Sivakumar~\cite{HaSivakumar2026}. The key additional observation is a strict-growth
lemma for the initial degrees of symbolic powers of a finite set of affine
points:
$$
 \alpha(J^{(t)})\ge \alpha(J^{(t-1)})+1\qquad(t\ge1).
$$
In characteristic $p$, if a minimum-degree polynomial has a nonzero first
ordinary derivative, this follows by differentiation; if all first ordinary
derivatives vanish, perfectness gives a $p$th root and an induction on the
symbolic exponent. This removes the lower bound on $p$ appearing in
\cite[Lemma~3.7 and Theorem~3.8]{HaSivakumar2026}. The remainder of the proof
uses the $q=p^e$ Frobenius decomposition and a maximal Hasse derivative.
%as in \cite[Section~3]{HaSivakumar2026}.
\end{abstract}

\maketitle

\section{Introduction}

For a nonzero homogeneous ideal $I\subseteq S$, we denote its initial degree by
\[
\alpha(I)=\min\{d\mid I_d\neq 0\}.
\]
In this work we are interested in homogeneous ideals
$I=I(X)\subset S=k[x_0,\ldots,x_n]$ defining a finite set
$X\subset \mathbb{P}^n_k$ of distinct $k$-rational points. For ideals of
points the sequence $r\mapsto \alpha(I^{(r)})$ is subadditive, so the limit
\[
\widehat{\alpha}(I)=\lim_{r\to\infty}\frac{\alpha(I^{(r)})}{r}
\]
is well defined by Fekete's lemma; see also~\cite[Lemma~2.3.1]{BocciHarbourne2010}.
The number $\widehat{\alpha}(I)$ is the Waldschmidt constant of $I$.

Symbolic powers of point ideals may be identified with prescribed orders of
vanishing; this is a standard instance of the Zariski--Nagata theorem
\cite{EisenbudHochster1979,Nagata1962,Zariski1949}.

Demailly conjectured that for every $m\geq 1$ one has
\begin{equation}\label{eq:Demailly}
\widehat{\alpha}(I)\geq
\frac{\alpha(I^{(m)})+n-1}{m+n-1}.
\end{equation}
For $m=1$ this specializes to Chudnovsky's inequality. The original
conjectures arose over the complex numbers in the works of Chudnovsky
\cite{Chudnovsky1981} and Demailly~\cite{Demailly1982}.

Recently H\`a and Sivakumar~\cite{HaSivakumar2026} proved \eqref{eq:Demailly} in
characteristic zero by first proving a finite Frobenius estimate after
reduction to sufficiently large positive characteristic and then specializing
back to characteristic zero. Their positive-characteristic argument contains
the following estimate; see~\cite[Theorem~3.8]{HaSivakumar2026}. If $J$ is the ideal of
finitely many affine points, $q=p^e$, and
\[
M=q(m+n-1)-(n-1),
\]
then, under a lower bound on $p$, one has
\begin{equation}\label{eq:frob-estimate-intro}
\alpha(J^{(M)})\geq q\alpha(J^{(m)})+(q-1)(n-1).
\end{equation}

The point of the present paper is that this large-characteristic restriction
is not intrinsic to the Frobenius--Hasse derivative method. It enters only at
the step where one wants to differentiate a polynomial of minimum degree in a
symbolic power. In small characteristic such a polynomial may be a
nonconstant $p$th power, in which case all first ordinary derivatives vanish.
Our key observation is that this exceptional case still forces the required
growth: over a perfect field one can take a $p$th root, which lowers the
symbolic exponent from $t$ to $\lceil t/p\rceil$, and then argue inductively.
Thus the Frobenius phenomenon responsible for the characteristic restriction
can be absorbed recursively rather than excluded by a numerical assumption
on $p$.

More precisely, we prove that if $J$ is the radical ideal of a nonempty finite
set of points in affine space over a perfect field of characteristic $p>0$,
then
\[
\alpha(J^{(t)})\geq \alpha(J^{(t-1)})+1
\qquad (t\geq 1).
\]
This strict-growth statement holds in every positive characteristic. It
replaces the large-characteristic growth lemma used in~\cite{HaSivakumar2026} and removes
the lower bound on $p$ from \eqref{eq:frob-estimate-intro}. The remaining
part of the argument uses the same $q=p^e$ Frobenius decomposition and a
maximal Hasse derivative as in~\cite{HaSivakumar2026}.

Our main result is the following.

\begin{theorem}\label{thm:main}
Let $k$ be an algebraically closed field of characteristic $p>0$ and let
$n\geq 1$. Let $X\subset \mathbb{P}^n_k$ be a finite nonempty set of distinct
points, with defining ideal $I=I(X)$. Then for every integer $m\geq 1$,
\[
\widehat{\alpha}(I)\geq
\frac{\alpha(I^{(m)})+n-1}{m+n-1}.
\]
\end{theorem}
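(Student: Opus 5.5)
We reduce the theorem to the Frobenius estimate \eqref{eq:frob-estimate-intro} for affine points, now valid in every characteristic $p>0$, and then let $e\to\infty$.

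\emph{Reduction to affine points.} Choose a hyperplane $H$ avoiding $X$; this is possible since $k$ is infinite. After a linear change of coordinates take $H=\{x_0=0\}$, so $X$ lies in the chart $\Aff^n=\{x_0\neq0\}$. Let $J\subset R=k[y_1,\dots,y_n]$ be the ideal of the corresponding affine points. By Zariski--Nagata, $I^{(r)}$ consists of the forms vanishing to order $\ge r$ at each point, and likewise for $J^{(r)}$. Dehomogenization $F\mapsto F(1,y)$ and homogenization $f\mapsto x_0^{\deg f}f(y/x_0)$ preserve these orders of vanishing. Hence a nonzero form of degree $d$ in $I^{(r)}$ gives a polynomial of degree $\le d$ in $J^{(r)}$, and conversely. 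Setting $\alpha(J^{(r)})=\min\{\deg f: 0\ne f\in J^{(r)}\}$, we get $\alpha(I^{(r)})=\alpha(J^{(r)})$ for all $r$.

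\emph{Strict growth.} Next prove $\alpha(J^{(t)})\ge\alpha(J^{(t-1)})+1$ by strong induction on $t$. Take $f\in J^{(t)}$ of minimal degree $d$.
\begin{itemize}
\item If some $\partial f/\partial y_i\neq0$, then this derivative lies in $J^{(t-1)}$ and has degree $\le d-1$, which gives the claim.
\item If all first derivatives vanish, then $f\in k[y_1^p,\dots,y_n^p]$. Since $k$ is perfect, $f=g^p$ with $\deg g=d/p$, and $g\in J^{(\lceil t/p\rceil)}$. So $d\ge p\,\alpha(J^{(s)})$ with $s=\lceil t/p\rceil<t$ (note $t\ge2$ here, since $f$ is nonconstant).
\end{itemize}
In the second case we must show $p\,\alpha(J^{(s)})\ge\alpha(J^{(t-1)})+1$. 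Since $ps\ge t$, we have $p(s-1)\le t-1$. Take $h\in J^{(s)}$ of degree $\alpha(J^{(s)})$. Multiplying $h^{p-1}$ by a minimal-degree element of $J^{(s-1)}$ gives an element of $J^{(t-1)}$, since $(p-1)s+(s-1)=ps-1\ge t-1$. Therefore
\[
\alpha(J^{(t-1)})\le (p-1)\alpha(J^{(s)})+\alpha(J^{(s-1)})\le p\,\alpha(J^{(s)})-1,
\]
where the last step uses the inductive hypothesis $\alpha(J^{(s-1)})\le\alpha(J^{(s)})-1$. This closes the induction.

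\emph{Frobenius--Hasse step (main obstacle).} Let $q=p^e$, $M=q(m+n-1)-(n-1)$, and let $0\ne f\in J^{(M)}$ have minimal degree. Decompose
\[
f=\sum_{0\le a_i<q} y^{a}\,g_a^{\,q},
\]
following \cite{HaSivakumar2026}. Choose $a$ with $g_a\neq0$ maximal in a suitable order, and apply the Hasse derivative $D^{(a)}$. The Hasse derivative lowers vanishing order by $|a|$ and kills the $q$-th powers only in the right way, so $D^{(a)}f$ is a polynomial whose "$a$-component" is $g_a^q$ up to terms controlled by the maximality. One then extracts $g_a\in J^{(\lceil (M-|a|)/q\rceil)}$. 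Since $|a|\le n(q-1)$, we get
\[
\lceil (M-|a|)/q\rceil\ge m-\frac{(n-1)-(q-1)}{q}\ \Rightarrow\ \ge m.
\]
A sharper count uses strict growth: each unit of $|a|$ beyond the threshold forces $\deg g_a$ up. This is exactly where the lemma replaces the large-$p$ hypothesis. Comparing degrees, $\deg f\ge |a|+q\deg g_a$, and combined with strict growth for the shortfall in the exponent this yields $\alpha(J^{(M)})\ge q\,\alpha(J^{(m)})+(q-1)(n-1)$. I expect the bookkeeping here to be the delicate part: matching the Hasse-derivative order loss against increments in $\alpha(J^{(\cdot)})$ so that the bound holds uniformly in $|a|$. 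I would follow \cite[Theorem~3.8]{HaSivakumar2026} verbatim, substituting our lemma wherever their growth lemma is used.

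\emph{Conclusion.} The sequence $r\mapsto\alpha(I^{(r)})$ is subadditive, so $\widehat\alpha(I)=\lim_r\alpha(I^{(r)})/r$, and we may compute it along $r=M_e=p^e(m+n-1)-(n-1)$. The Frobenius estimate gives
\[
\frac{\alpha(I^{(M_e)})}{M_e}\ge\frac{q\,\alpha(I^{(m)})+(q-1)(n-1)}{q(m+n-1)-(n-1)}.
\]
As $q\to\infty$ the right-hand side tends to $\dfrac{\alpha(I^{(m)})+n-1}{m+n-1}$, which proves the theorem.
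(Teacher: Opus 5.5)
Your route is the paper's route: the same dehomogenization, the same strict-growth lemma proved by the same $p$th-root induction, the same $q$-Frobenius decomposition with a maximal Hasse derivative, and the same limit along $M_e$. The strict-growth part is the genuinely new ingredient, and it is correct as you wrote it. Your product $h^{p-1}h'\in J^{(ps-1)}\subseteq J^{(t-1)}$, of degree at most $p\,\alpha(J^{(s)})-1$, is a valid and slightly simpler substitute for the paper's $G^{r-1}H^{p-r+1}$.

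The gap is in the Frobenius--Hasse step. You only sketch it, and the sketch as written does not establish the estimate. First, you need the \emph{exact} identity $D_\gamma(f)=h_\gamma^q$ for $\gamma$ maximal in $\Supp_q(f)$ in the coordinatewise order, not ``$g_a^q$ up to terms controlled by the maximality.'' It holds for the following reason. In the Leibniz expansion of $D_\gamma(h_\lambda^q x^\lambda)$, the factor $D_\alpha(h_\lambda^q)$ vanishes unless $\alpha\in q\NN^n$. Since $0\le\alpha\le\gamma$ and all $\gamma_i<q$, this forces $\alpha=0$. Then $D_\gamma(x^\lambda)\neq0$ forces $\lambda\ge\gamma$, hence $\lambda=\gamma$ by maximality. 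Exactness is indispensable. All you know is $D_\gamma(f)\in J^{(M-|\gamma|)}$, and you cannot read off vanishing orders of individual Frobenius components when the points are not at the origin. For example, in characteristic $3$ with $q=3$, $(x-1)^2=x^2+x+1$ vanishes to order $2$ at $x=1$, yet all three of its components $h_0,h_1,h_2$ equal $1$.

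Second, your range computation is wrong. One has $M-|\gamma|\ge q(m+n-1)-(n-1)-n(q-1)=q(m-1)+1$, so $(M-|\gamma|)/q\ge m-1+1/q$ and $t:=\lceil (M-|\gamma|)/q\rceil\ge m$. The bound $m-\frac{(n-1)-(q-1)}{q}$ you display already fails for $n=1$.

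Third, the ``delicate bookkeeping'' is a single line, and you should write it out rather than defer to H\`a--Sivakumar:
\begin{itemize}
\item telescoping strict growth gives $\deg h_\gamma\ge\alpha(J^{(t)})\ge\alpha(J^{(m)})+t-m$;
\item disjointness of the residue classes mod $q$ gives $\deg f\ge q\deg h_\gamma+|\gamma|$;
\item $qt\ge M-|\gamma|$ then yields
\[
\alpha(J^{(M)})=\deg f\ge q\alpha(J^{(m)})+(qt+|\gamma|)-qm\ge q\alpha(J^{(m)})+M-qm=q\alpha(J^{(m)})+(q-1)(n-1).
\]
\end{itemize}
No upper bound on $t$ and no case distinction on $|\gamma|$ are needed. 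With these three points filled in, your argument is complete and coincides with the paper's proof.
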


\section{Affine points, symbolic powers, and dehomogenization}

We first pass to the affine version of the interpolation problem. To this end, let
$
R=k[x_1,\dots,x_n]
$
and let $P_1,\dots,P_s\in\Aff^n_k$ be distinct points. Let
$
\mm_i=(x_1-a_{i1},\dots,x_n-a_{in})
$
be the maximal ideal of 
$P_i=(a_{i1},\ldots,a_{in})$, and let
$$
J=\bigcap_{i=1}^s\mm_i
$$
be the ideal of the union of points $X$.

Since the $\mm_i$ are distinct maximal ideals, the symbolic powers of $J$ are
\begin{equation}\label{eq:symbolic-affine}
J^{(r)}=\bigcap_{i=1}^s\mm_i^r \qquad(r\ge1).
\end{equation}
For any nonzero ideal $L\subseteq R$, not necessarily homogeneous, define its initial degree
$$
\alpha(L)=\min\{\deg f\mid 0\neq f\in L\},
$$
where $\deg$ denotes the total degree.
More generally, we consider the initial sequence of $L$,
$a_t:=\alpha(L^{(t)})$ for $t\ge1$ with the convention $a_0=0$ (or equivalently, $J^{(0)}=R$). As in the homogeneous case, we set
$\widehat\alpha(L)=\lim_{t\to\infty}a_t/t$.

For a point $P\in\Aff^n_k$ with maximal ideal $\mm_P$ and $0\neq f\in R$, define the order of vanishing at $P$ as
$$
\ord_P(f)=\max\{r\ge0\mid f\in\mm_P^r\}.
$$
Thus
$$
 f\in J^{(r)}
 \quad\Longleftrightarrow\quad
 \ord_{P_i}(f)\ge r\mbox{ for every }i.
$$

We shall use the following projective-to-affine reduction. It is the same
statement as \cite[Theorem~2.1]{HaSivakumar2026}; we include the argument for the sake of
completeness.

\begin{proposition}\label{prop:dehom}
Let $X=\{Q_1,\dots,Q_s\}\subset\PP^n_k$ be a finite set of points and $k$ be infinite. After a
linear change of projective coordinates we can assume that every $Q_i$ lies in the affine
chart $x_0\neq0$. So we have
$Q_i=[1:a_{i1}:\cdots:a_{in}]$.
Let
$$
J=\bigcap_{i=1}^s (y_1-a_{i1},\dots,y_n-a_{in})
\subset k[y_1,\dots,y_n].
$$
Then for every $r\ge1$,
$\alpha(I(X)^{(r)})=\alpha(J^{(r)})$ 
and consequently,
$\widehat\alpha(I(X))=\widehat\alpha(J)$.
\end{proposition}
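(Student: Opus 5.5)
We prove the two inequalities $\alpha(I(X)^{(r)})\le\alpha(J^{(r)})$ and $\alpha(I(X)^{(r)})\ge\alpha(J^{(r)})$ separately, using homogenization and dehomogenization with respect to $x_0$. First we justify the coordinate change. Since $k$ is infinite and $X$ is finite, there is a linear form $\ell$ that vanishes at no $Q_i$: the forms vanishing at a given $Q_i$ make up a proper hyperplane of $S_1$, and $S_1$ is not a finite union of proper subspaces. After a linear change of coordinates with $\ell=x_0$, every $Q_i$ can be written as $[1:a_{i1}:\cdots:a_{in}]$. A linear change of coordinates is a graded automorphism of $S$, so it preserves both $\alpha$ and symbolic powers.

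Next we describe $I(X)^{(r)}$. It equals $\bigcap_i \mathfrak p_i^r$, where $\mathfrak p_i$ is the homogeneous prime of $Q_i$; this is the Zariski--Nagata description the paper already uses. A homogeneous $F\in S_d$ lies in $\mathfrak p_i^{r}$ exactly when its dehomogenization $f=F(1,y_1,\dots,y_n)$ satisfies $\ord_{P_i}(f)\ge r$, where $P_i=(a_{i1},\dots,a_{in})$.

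To check this local claim, change variables so that $Q_i=[1:0:\cdots:0]$. Then $\mathfrak p_i=(x_1,\dots,x_n)$. Write $F=\sum_j x_0^{d-j}F_j$ with $F_j\in k[x_1,\dots,x_n]$ homogeneous of degree $j$. Then $F\in\mathfrak p_i^r$ if and only if $F_j=0$ for all $j<r$, and this is exactly the condition $f=\sum_j F_j(y)\in(y_1,\dots,y_n)^r$. The translation $y\mapsto y-a_i$ is compatible with this, because the substitution $x_j\mapsto x_j-a_{ij}x_0$ is a graded automorphism of $S$ fixing $x_0$.

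For $\alpha(J^{(r)})\le\alpha(I(X)^{(r)})$, take $0\ne F\in I(X)^{(r)}$ of degree $d=\alpha(I(X)^{(r)})$. We may assume $x_0\nmid F$. If $F=x_0G$, then $G$ still lies in every $\mathfrak p_i^r$, because $x_0\notin\mathfrak p_i$ and $\mathfrak p_i^r$ is $\mathfrak p_i$-primary; this would contradict the minimality of $d$. So the dehomogenization $f$ is nonzero, has $\deg f\le d$, and lies in $J^{(r)}$ by \eqref{eq:symbolic-affine} and the local claim.

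For $\alpha(I(X)^{(r)})\le\alpha(J^{(r)})$, take $0\ne f\in J^{(r)}$ of degree $e$ and homogenize it: $F=x_0^ef(x_1/x_0,\dots,x_n/x_0)$. This is homogeneous of degree $e$, dehomogenizes back to $f$, and so lies in $I(X)^{(r)}$ by the local claim.

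Together the two inequalities give $\alpha(I(X)^{(r)})=\alpha(J^{(r)})$ for every $r\ge1$. Dividing by $r$ and letting $r\to\infty$ gives $\widehat\alpha(I(X))=\widehat\alpha(J)$. The one step that needs care is the local claim, namely that membership in $\mathfrak p_i^r$ matches order of vanishing after dehomogenization and translation. The rest is bookkeeping.
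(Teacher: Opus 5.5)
Your proof is correct and follows the paper's argument: dehomogenize a minimal-degree form for $\alpha(I(X)^{(r)})\ge\alpha(J^{(r)})$, and homogenize a minimal-degree affine polynomial for the reverse inequality. In both directions the compatibility between $\mathfrak p_i^r$ and $\mm_i^r$ is checked in translated coordinates; the paper expands in $z_{ij}=x_j-a_{ij}x_0$ and $w_{ij}=y_j-a_{ij}$ rather than moving $Q_i$ to $[1:0:\cdots:0]$. Your reduction to $x_0\nmid F$ is harmless but unnecessary. The map $F\mapsto F(1,y_1,\dots,y_n)$ is injective on $S_d$ (the paper's $\phi_d\colon S_d\to R_{\le d}$ is an isomorphism), so the dehomogenization of any nonzero form is already nonzero.
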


\begin{proof}
Since $k$ is infinite, there exists a hyperplane avoiding
all points of $X$. After a linear change of coordinates, we may assume that this hyperplane
is $x_0=0$.

For any $d\ge0$ dehomogenization gives a vector-space isomorphism
$$
\phi_d:S_d\ni F\longmapsto F(1,y_1,\dots,y_n) \in R_{\le d},
$$
whose inverse is degree-$d$ homogenization
$$
 \psi_d(f)=x_0^d
 f\!\left(\frac{x_1}{x_0},\dots,\frac{x_n}{x_0}\right).
$$
Setting
$z_{ij}=x_j-a_{ij}x_0$ and
$w_{ij}=y_j-a_{ij}$ we have
$I(Q_i)=(z_{i1},\dots,z_{in})$ and
$\phi_1(z_{ij})=w_{ij}$.

For arbitrary $F\in I(Q_i)^r\cap S_d$, every term in the expression of $F$ in the coordinates
$z_{ij}$ has total $z_i$-degree at least $r$, so dehomogenization gives
$\phi_d(F)\in\mm_i^r$.
Thus
$$
 F\in I(X)^{(r)}=\bigcap_i I(Q_i)^r
 \quad\Longrightarrow\quad
 \phi_d(F)\in\bigcap_i\mm_i^r=J^{(r)}.
$$
Since dehomogenization does not increase the total degree,
$$
 \alpha(I(X)^{(r)})\ge\alpha(J^{(r)}).
$$

Conversely, let $0\neq f\in J^{(r)}$ be a polynomial of degree $d$. For each $i$, we expand
$f$ in the translated coordinates $w_{i1},\dots,w_{in}$:
$$
 f=\sum_{\beta\in\NN^n}c_{i,\beta}w_i^\beta.
$$
Since $f\in\mm_i^r$, we have $c_{i,\beta}=0$ whenever $|\beta|<r$. 
Moreover, $|\beta|\le d$ for every term that occurs. Hence
$$
 \psi_d(f)
 =\sum_{|\beta|\ge r}
 c_{i,\beta}x_0^{d-|\beta|}z_i^\beta
 \in I(Q_i)^r.
$$
This holds for every $i$, so
$\psi_d(f)\in I(X)^{(r)}$.
As $\psi_d(f)$ is a nonzero homogeneous form of degree $d$, we obtain
$$
 \alpha(I(X)^{(r)})\le d.
$$
Taking $d=\alpha(J^{(r)})$ proves equality of the initial degrees. Equality of the Waldschmidt
constants follows by dividing by $r$ and taking the limit.
\end{proof}

Hence it is enough to prove \cref{thm:main} for finite sets of affine points.

\section{Differentiation and Frobenius}

For this section, it is enough to assume that $k$ is a perfect field of characteristic $p>0$. Recall that in characteristic $p$, a field is perfect if and only if its Frobenius map $a\to a^p$ is surjective; see \cite[Chapter V, Corollary 6.12]{LangAlgebra}. In particular, algebraically closed fields are perfect.

\subsection{Ordinary derivatives and $p$th powers}

\begin{lemma}\label{lem:pth-power}
Let $f\in k[x_1,\ldots,x_n]$. If
$$
 \frac{\partial f}{\partial x_i}=0
 \mbox{ for all }1\le i\le n,
$$
then
$f\in k[x_1^p,\dots,x_n^p]$.
If $k$ is perfect, then there exists $g\in R$ such that
$f=g^p$.
\end{lemma}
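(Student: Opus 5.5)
The plan is to argue monomial by monomial. Write
\[
 f=\sum_{\beta\in\NN^n}c_\beta x^\beta
\]
with finitely many nonzero $c_\beta\in k$. Since $\partial/\partial x_i$ sends distinct monomials to distinct monomials (or to zero), the coefficient of $x^{\beta-e_i}$ in $\partial f/\partial x_i$ is exactly $\beta_i c_\beta$, with $\beta_i$ read in $k$. Here $e_i$ denotes the $i$th standard basis vector.

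So the hypothesis $\partial f/\partial x_i=0$ gives $\beta_i c_\beta=0$ in $k$ for every $\beta$ with $\beta_i\ge1$. Whenever $c_\beta\neq0$, this means $\beta_i\equiv0 \pmod p$ for each $i$; when $\beta_i=0$ the congruence holds trivially. Hence every monomial occurring in $f$ has all exponents divisible by $p$, which is to say $f\in k[x_1^p,\dots,x_n^p]$. This proves the first assertion.

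For the second assertion, write $f=\sum_\gamma c_{p\gamma}x^{p\gamma}$. Since $k$ is perfect, the Frobenius map $a\mapsto a^p$ is surjective, so we may choose $b_\gamma\in k$ with $b_\gamma^p=c_{p\gamma}$. Put
\[
 g=\sum_\gamma b_\gamma x^\gamma\in R.
\]
Because $R$ has characteristic $p$, the map $h\mapsto h^p$ is a ring endomorphism (the freshman's dream, applied repeatedly to a finite sum). Therefore
\[
 g^p=\sum_\gamma b_\gamma^p x^{p\gamma}=f.
\]

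No step is a genuine obstacle. The only points needing care are the observation that distinct monomials have distinct derivatives, so no cancellation can occur in $\partial f/\partial x_i$, and the additivity of Frobenius on $R$.
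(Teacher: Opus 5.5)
Your proof is correct and matches the paper's argument essentially step for step. Both use the absence of cancellation among the monomials of $\partial f/\partial x_i$ to force $p\mid\beta_i$ throughout the support of $f$, and then take $p$th roots of the coefficients via perfectness of $k$ together with additivity of Frobenius.
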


\begin{proof}
Let
$$
 f=\sum_{\beta\in\NN^n}c_\beta x^\beta,
 \mbox{ where }
 x^\beta=x_1^{\beta_1}\cdots x_n^{\beta_n}.
$$
Then
$$
 \frac{\partial f}{\partial x_i}
 =\sum_{\beta_i>0}
 c_\beta\beta_i
 x_1^{\beta_1}\cdots x_i^{\beta_i-1}\cdots x_n^{\beta_n}.
$$
For fixed $i$, distinct exponent vectors $\beta$ give distinct monomials
$x^{\beta-e_i}$, so there is no cancellation among these displayed terms.
Therefore, if $\partial f/\partial x_i=0$, then for every $\beta$ with
$c_\beta\neq0$ we get
$\beta_i=0\quad\text{in }k$,
and hence $p\mid\beta_i$. If all first partial derivatives vanish, every
coordinate of every exponent vector in the support of $f$ is divisible by
$p$. Thus
$$
 f=\sum_\mu c_{p\mu}x^{p\mu}.
$$
Since $k$ is perfect, there are $b_\mu\in k$ with
$b_\mu^p=c_{p\mu}$. The Frobenius map is a ring
homomorphism, so
$$
 f
 =\sum_\mu b_\mu^p x^{p\mu}
 =\left(\sum_\mu b_\mu x^\mu\right)^p
$$
and we are done with $g=\sum_\mu b_\mu x^\mu$.
\end{proof}

\begin{lemma}\label{lem:order-frob}
Let $P\in\Aff^n_k$, $0\neq g\in R$, and let $q=p^e$. Then
$$
 \ord_P(g^q)=q\,\ord_P(g).
$$
Consequently, for every integer $s\ge1$,
$$
 g^q\in\mm_P^s
 \quad\Longleftrightarrow\quad
 g\in\mm_P^{\lceil s/q\rceil}.
$$
\end{lemma}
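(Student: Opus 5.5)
Translate so that $P$ is the origin. The substitution $x_j\mapsto x_j+a_j$ is a $k$-algebra automorphism of $R$. It carries $\mm_P$ onto $\mm_0=(x_1,\dots,x_n)$, so it preserves orders of vanishing, and it commutes with the Frobenius $q$th power, since that is a ring endomorphism. Hence we may assume $P=0$. Then $\ord_0(g)$ is the lowest total degree $r$ among the monomials occurring in $g$: for $g\neq 0$ we have $g\in\mm_0^r$ exactly when every monomial of $g$ has degree $\ge r$, because $\mm_0^r$ is the monomial ideal spanned by the monomials of degree $\ge r$.

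Write $g=\sum_{d\ge r}g_d$ with $g_d$ homogeneous of degree $d$, where $r=\ord_0(g)$ and $g_r\neq0$. Since $q=p^e$ and the $q$th power map is additive in characteristic $p$,
$$
g^q=\sum_{d\ge r} g_d^{\,q}.
$$
Each $g_d^{\,q}$ is homogeneous of degree $qd$, so these pieces lie in distinct degrees and no cancellation occurs. It remains to see that $g_r^{\,q}\neq0$, which holds because $R$ is a domain (equivalently, $g_r^q=\sum c_\beta^q x^{q\beta}$, the map $\beta\mapsto q\beta$ is injective, and $c\mapsto c^q$ is injective on $k$). Therefore the lowest-degree homogeneous part of $g^q$ is $g_r^{\,q}$, of degree $qr$, and $\ord_0(g^q)=qr$.

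For the equivalence, $g^q\in\mm_P^s$ holds iff $q\,\ord_P(g)\ge s$, i.e. iff $\ord_P(g)\ge s/q$. Since $\ord_P(g)$ is an integer, this is the same as $\ord_P(g)\ge\lceil s/q\rceil$, i.e. $g\in\mm_P^{\lceil s/q\rceil}$. The case $g=0$ is excluded by hypothesis, so there are no edge cases.

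The only step needing any care is the nonvanishing of the initial form $g_r^{\,q}$. Additivity of Frobenius makes the rest formal, and perfectness of $k$ is not needed for this lemma.
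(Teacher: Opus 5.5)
Your proof is correct and follows the paper's argument: translate $P$ to the origin, identify $\ord_0$ with the least total degree, and use additivity of the $q$th-power Frobenius to see that the lowest-degree part of $g^q$ has degree $q\,\ord_0(g)$. The paper tracks individual monomials $c_\beta^q x^{q\beta}$ where you track homogeneous components $g_d^{\,q}$, which amounts to the same thing, and your remark that perfectness of $k$ is not needed here is also right.
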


\begin{proof}
We translate coordinates so that $P$ is the origin. Let
$g=\sum_\beta c_\beta x^\beta$
and
$d=\min\{|\beta|\mid c_\beta\neq0\}=\ord_P(g)$.
Since $q$ is a power of $p$,
$$
 g^q=\sum_\beta c_\beta^q x^{q\beta}.
$$
Its least total degree is $qd$, so
$\ord_P(g^q)=qd$. The final equivalence is immediate.
\end{proof}

\begin{lemma}
\label{lem:ordinary-order}
For every $f\in R$, every point $P$, and every $i$, there is
$$
 \frac{\partial f}{\partial x_i}
 \in\mm_P^{\max\{\ord_P(f)-1,0\}}.
$$
In particular, if $f\in J^{(t)}$, then
$$
 \frac{\partial f}{\partial x_i}\in J^{(t-1)}
 \qquad(t\ge1).
$$
\end{lemma}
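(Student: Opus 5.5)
We translate coordinates so that $P$ is the origin, i.e. we set $w_j=x_j-a_j$ where $P=(a_1,\dots,a_n)$. This is a $k$-algebra automorphism of $R$ sending $\mm_P$ to $(w_1,\dots,w_n)$. Since $\partial/\partial x_i=\partial/\partial w_i$ (the chain rule for the translation is trivial), both the order of vanishing and the partial derivative are compatible with this change. So it suffices to treat $P=0$ and $\mm_P=(x_1,\dots,x_n)$.

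For $P=0$, set $d=\ord_P(f)$; the case $f=0$ is trivial, since then the derivative is $0$. Write
$$f=\sum_{|\beta|\ge d}c_\beta x^\beta.$$
By the formula in the proof of \cref{lem:pth-power},
$$\frac{\partial f}{\partial x_i}=\sum_{\beta_i>0}c_\beta\beta_i\,x^{\beta-e_i},$$
and every monomial $x^{\beta-e_i}$ appearing here has total degree $|\beta|-1\ge d-1$. Hence the derivative lies in $\mm_P^{\max\{d-1,0\}}$, where $\mm_P^0=R$ covers the case $d=0$. Some coefficients $c_\beta\beta_i$ may vanish in characteristic $p$. This only helps: it removes terms and so can only raise the order.

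For the consequence, take $f\in J^{(t)}$ with $t\ge1$. By \eqref{eq:symbolic-affine}, $\ord_{P_i}(f)\ge t$ for every $i$. The first part then gives $\partial f/\partial x_i\in\mm_i^{t-1}$ for each $i$, since $\max\{\ord_{P_i}(f)-1,0\}\ge t-1$. Intersecting over $i$ yields $J^{(t-1)}$, with the convention $J^{(0)}=R$ when $t=1$.

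No step here is a real obstacle. The only point needing care is the translation invariance of $\partial/\partial x_i$ in arbitrary characteristic. This holds because the translation $x_j\mapsto w_j+a_j$ is affine with identity linear part, so the derivation $\partial/\partial x_i$ agrees with $\partial/\partial w_i$ on the generators $w_j$ and hence on all of $R$.
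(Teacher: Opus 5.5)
Your proposal is correct and follows essentially the same route as the paper: translate $P$ to the origin, observe that differentiating a monomial either kills it or lowers its degree by exactly one, and then intersect over the points $P_j$ using \eqref{eq:symbolic-affine}. Your extra remarks (the $f=0$ case, the convention $\mm_P^0=R$, and why $\partial/\partial x_i$ agrees with $\partial/\partial w_i$ in characteristic $p$) are valid details that the paper leaves implicit.
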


\begin{proof}
After translating $P$ to the origin, every monomial of a polynomial in
$\mm_P^t$ has total degree at least $t$. Differentiating a monomial either
kills it or decreases its total degree by exactly one. Hence
$\partial(\mm_P^t)/\partial x_i\subseteq\mm_P^{t-1}$. We apply this at each
point $P_j$ and use \eqref{eq:symbolic-affine}.
\end{proof}

\subsection{Hasse derivatives and the $q$-Frobenius decomposition}

For the large Frobenius estimate we use Hasse derivatives, following the lines of
\cite[Section~3]{HaSivakumar2026}. If $\lambda,\beta\in\NN^n$, we define
$$
 D_\lambda(x^\beta)=
 \begin{cases}
 \displaystyle\binom{\beta}{\lambda}x^{\beta-\lambda},
 &\beta\ge\lambda\text{ coordinatewise},\\[5pt]
 0,&\text{otherwise},
 \end{cases}
$$
and extend $k$-linearly.
We use the convention
$$
 \binom{\beta}{\lambda}
 =\prod_{i=1}^n\binom{\beta_i}{\lambda_i}.
$$
Equivalently, $D_\lambda(f)$ is the coefficient of $T^\lambda$ in the
Hasse--Taylor expansion
\begin{equation}\label{eq:HasseTaylor}
 f(x+T)=\sum_{\lambda\in\NN^n}D_\lambda(f)(x)T^\lambda.
\end{equation}
The following lemma generalizes \cref{lem:ordinary-order}.
\begin{lemma}\label{lem:Hasse-order}
For every point $P$ and every $s\ge0$, one has
$$
 D_\lambda(\mm_P^s)
 \subseteq
 \mm_P^{\max\{s-|\lambda|,0\}}.
$$
Consequently,
$$
 D_\lambda(J^{(s)})
 \subseteq
 J^{(\max\{s-|\lambda|,0\})}.
$$
\end{lemma}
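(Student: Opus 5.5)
The argument mirrors the proof of \cref{lem:ordinary-order}. The one extra ingredient is that Hasse derivatives commute with translations. First I reduce to the case where $P$ is the origin. Write $\tau_a$ for the $k$-algebra automorphism $x\mapsto x+a$ of $R$, where $a$ is the coordinate vector of $P$. Then $\tau_a(\mm_0)=\mm_P$, hence $\tau_a(\mm_0^s)=\mm_P^s$. It remains to check $D_\lambda\circ\tau_a=\tau_a\circ D_\lambda$, and the Hasse--Taylor expansion \eqref{eq:HasseTaylor} gives this directly. For $g=\tau_a f$, i.e.\ $g(x)=f(x+a)$, we have
\[
g(x+T)=f(x+a+T)=\sum_\lambda D_\lambda(f)(x+a)\,T^\lambda .
\]
Comparing coefficients of $T^\lambda$ yields $D_\lambda(g)=\tau_a(D_\lambda f)$. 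So it suffices to prove $D_\lambda(\mm_0^s)\subseteq\mm_0^{\max\{s-|\lambda|,0\}}$.

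At the origin, $\mm_0^s$ is spanned over $k$ by the monomials $x^\beta$ with $|\beta|\ge s$. By $k$-linearity of $D_\lambda$, it is enough to treat a single such monomial. By definition, $D_\lambda(x^\beta)$ is either $0$ or a scalar multiple of $x^{\beta-\lambda}$. In the nonzero case $|\beta-\lambda|=|\beta|-|\lambda|\ge s-|\lambda|$. If $s-|\lambda|\le0$, the target ideal is $\mm_0^0=R$ and there is nothing to prove. Otherwise $x^{\beta-\lambda}\in\mm_0^{s-|\lambda|}$, which gives the first inclusion.

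For the consequence, let $f\in J^{(s)}=\bigcap_i\mm_i^s$, using \eqref{eq:symbolic-affine}. Applying the first part at each $P_i$ gives $D_\lambda f\in\bigcap_i\mm_i^{\max\{s-|\lambda|,0\}}$. When the exponent is positive, this intersection is $J^{(\max\{s-|\lambda|,0\})}$ by \eqref{eq:symbolic-affine}. When it is zero, it is $R=J^{(0)}$ by the stated convention.

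The only step that is not bookkeeping is the commutation of $D_\lambda$ with translation. In characteristic $p$ one cannot argue via iterated ordinary derivatives, so I would rely purely on the formal identity \eqref{eq:HasseTaylor} in $R[T_1,\dots,T_n]$ as above. That identity is itself equivalent to the Vandermonde-type expansion $(x+T)^\beta=\sum_\lambda\binom{\beta}{\lambda}x^{\beta-\lambda}T^\lambda$, which holds over $\mathbb{Z}$ and hence over $k$.
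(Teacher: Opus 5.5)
Your proposal is correct and follows the paper's route: move $P$ to the origin, check the monomials $x^\beta$ with $|\beta|\ge s$, then intersect over the points; you also give an explicit Hasse--Taylor argument that $D_\lambda$ commutes with translations, which the paper uses without comment. One small sign slip: with $\tau_a(x)=x+a$ one gets $\tau_a(\mm_P)=\mm_0$, i.e.\ $\tau_{-a}(\mm_0)=\mm_P$, not $\tau_a(\mm_0)=\mm_P$, but your commutation identity holds for every translation vector, so nothing else changes.
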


\begin{proof}
We translate $P$ to the origin. If $|\beta|\ge s$, then
$D_\lambda(x^\beta)$ is either zero or a scalar multiple of
$x^{\beta-\lambda}$, whose degree is
$|\beta|-|\lambda|\ge s-|\lambda|$. This proves the first inclusion. The
second follows by intersecting the corresponding inclusions for the maximal ideals of each point.
\end{proof}

\begin{lemma}\label{lem:Hasse-qpower}
Let $q=p^e$. For $h\in R$ we have
$D_\lambda(h^q)=0$ unless $\lambda=q\mu$ for some $\mu\in\NN^n$ and
$D_{q\mu}(h^q)=D_\mu(h)^q$.
\end{lemma}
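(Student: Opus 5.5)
We prove the statement using the Hasse--Taylor expansion \eqref{eq:HasseTaylor}, which characterizes $D_\lambda(f)$ as the coefficient of $T^\lambda$ in $f(x+T)$. Set $f=h^q$. In characteristic $p$ the Frobenius map $u\mapsto u^q$ is a ring homomorphism of $R[T_1,\dots,T_n]$. Substituting $x+T$ is also a ring homomorphism $R\to R[T]$. Hence
$$
 h^q(x+T)=\bigl(h(x+T)\bigr)^q
 =\Bigl(\sum_{\mu\in\NN^n}D_\mu(h)(x)\,T^\mu\Bigr)^q
 =\sum_{\mu\in\NN^n}D_\mu(h)(x)^q\,T^{q\mu}.
$$
The last equality expands the $q$th power of a finite sum in characteristic $p$, using $(T^\mu)^q=T^{q\mu}$.

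We then compare coefficients of $T^\lambda$ with the expansion $h^q(x+T)=\sum_\lambda D_\lambda(h^q)(x)T^\lambda$. The monomials $T^{q\mu}$ are pairwise distinct for distinct $\mu$. Therefore the coefficient of $T^\lambda$ is zero unless $\lambda=q\mu$ for some $\mu\in\NN^n$, and in that case it equals $D_\mu(h)^q$. This gives both assertions.

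The only step requiring care is the equivalence of the monomial definition of $D_\lambda$ with the Taylor-coefficient description \eqref{eq:HasseTaylor}, which the paper takes as given. It can be checked on monomials directly: the coefficient of $T^\lambda$ in $(x+T)^\beta=\prod_i(x_i+T_i)^{\beta_i}$ is $\binom{\beta}{\lambda}x^{\beta-\lambda}$ by the binomial theorem, and both sides are $k$-linear in $f$.

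One could instead argue from Lucas' theorem on the binomial coefficients $\binom{q\beta}{\lambda}$ modulo $p$. The Taylor-expansion route avoids that computation.
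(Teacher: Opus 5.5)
Your proposal is correct and follows essentially the same route as the paper: expand $h(x+T)$ by the Hasse--Taylor formula, raise it to the $q$th power using that Frobenius is a ring homomorphism, and compare coefficients of $T^\lambda$ with the expansion of $h^q(x+T)$. Your extra check that the monomial definition of $D_\lambda$ agrees with the Taylor-coefficient description is a harmless addition that the paper simply takes as given.
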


\begin{proof}
We apply \eqref{eq:HasseTaylor} to $h$ and use the fact that the $q$th-power Frobenius is
a ring homomorphism:
$$
 h(x+T)^q
 =\left(\sum_\mu D_\mu(h)(x)T^\mu\right)^q
 =\sum_\mu D_\mu(h)(x)^qT^{q\mu}.
$$
On the other hand,
$$
 h(x+T)^q
 =\sum_\lambda D_\lambda(h^q)(x)T^\lambda.
$$
Comparing coefficients of $T^\lambda$ provides the assertion.
\end{proof}

\begin{lemma}\label{lem:frob-decomp}
Let $q=p^e$ and
$\Lambda_q=\{0,1,\dots,q-1\}^n$.
Every polynomial $f\in R$ can be written uniquely as
\begin{equation}\label{eq:frob-decomp}
 f=\sum_{\lambda\in\Lambda_q}h_\lambda^q x^\lambda
 \mbox{ for some } h_\lambda\in R.
\end{equation}
Moreover, the monomial supports of the nonzero summands
$h_\lambda^q x^\lambda$ lie in pairwise distinct residue classes modulo $q$
in $\NN^n$.
\end{lemma}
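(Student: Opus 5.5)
The plan is to group the monomials of $f$ by the residue class of their exponent vectors modulo $q$, and then take $q$th roots of the coefficients, using that $k$ is perfect. Write $f=\sum_{\beta}c_\beta x^\beta$. Every $\beta\in\NN^n$ decomposes uniquely as $\beta=q\mu+\lambda$ with $\mu\in\NN^n$ and $\lambda\in\Lambda_q$. This is just division with remainder by $q$ in each coordinate separately. Hence
$$
 f=\sum_{\lambda\in\Lambda_q}\Bigl(\sum_{\mu}c_{q\mu+\lambda}x^{q\mu}\Bigr)x^\lambda .
$$
Since $k$ is perfect, for each coefficient choose $b_{\mu,\lambda}\in k$ with $b_{\mu,\lambda}^q=c_{q\mu+\lambda}$. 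This is possible because the $q$th-power map is the $e$-fold iterate of the surjective Frobenius. Set $h_\lambda=\sum_\mu b_{\mu,\lambda}x^\mu$. As in the proof of \cref{lem:pth-power}, the $q$th-power map is a ring homomorphism, so $h_\lambda^q=\sum_\mu c_{q\mu+\lambda}x^{q\mu}$. This gives existence in \eqref{eq:frob-decomp}.

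For the support statement, observe that every monomial of $h_\lambda^q$ has the form $x^{q\mu}$. Therefore every monomial of $h_\lambda^q x^\lambda$ has exponent $q\mu+\lambda$, which is congruent to $\lambda$ modulo $q$ coordinatewise. Distinct $\lambda\in\Lambda_q$ are pairwise incongruent modulo $q$, so the supports of the nonzero summands lie in pairwise distinct residue classes.

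Uniqueness follows from this support property. Suppose $\sum_\lambda h_\lambda^q x^\lambda=\sum_\lambda g_\lambda^q x^\lambda$. Subtracting and using additivity of Frobenius gives $\sum_\lambda (h_\lambda-g_\lambda)^q x^\lambda=0$. The summands have disjoint monomial supports, so no cancellation can occur between them, and each $(h_\lambda-g_\lambda)^q x^\lambda$ must vanish. Since $R$ is a domain, $(h_\lambda-g_\lambda)^q=0$ forces $h_\lambda=g_\lambda$.

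None of these steps is a real obstacle. The one point to state explicitly is that existence relies on perfectness of $k$ (the standing assumption of this section), whereas uniqueness holds over any field of characteristic $p$.
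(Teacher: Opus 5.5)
Your proof is correct and takes the same route as the paper: decompose exponents as $q\mu+\lambda$, take $q$th roots of the coefficients using perfectness, and deduce uniqueness from the disjointness of the residue-class supports. Your uniqueness step via $(h_\lambda-g_\lambda)^q=0$ in the domain $R$ is a slightly more explicit version of the paper's appeal to unique $q$th roots, and your remark that uniqueness does not need perfectness is accurate.
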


\begin{proof}
Every exponent vector $\beta\in\NN^n$ 
can be uniquely decomposed as
$\beta=q\mu+\lambda$ with $\mu\in\NN^n$ and $\lambda\in\Lambda_q$.

Since $k$ is perfect, every scalar $c\in k$ has a unique $q$th root in
$k$. Grouping the monomials of $f$ according
to the residue class $\lambda$ of their exponent vector modulo $q$ gives
\eqref{eq:frob-decomp}. Uniqueness follows because two distinct residue
classes modulo $q$ have disjoint monomial supports.
\end{proof}

\begin{lemma}
\label{lem:isolate}
Let $f$ be as in \eqref{eq:frob-decomp}. We set 
$\Supp_q(f)=\{\lambda\in\Lambda_q\mid h_\lambda\neq0\}$.
Let $\gamma$ be maximal in $\Supp_q(f)$ for the coordinatewise partial order
on $\NN^n$. Then
$$
 D_\gamma(f)=h_\gamma^q.
$$
\end{lemma}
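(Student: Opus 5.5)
We prove this by direct computation: apply $D_\gamma$ to each summand $h_\lambda^q x^\lambda$ of \eqref{eq:frob-decomp} separately, using linearity. The plan is a Leibniz-type analysis via the Hasse--Taylor expansion \eqref{eq:HasseTaylor}. We show that every summand with $\lambda\neq\gamma$ is killed by $D_\gamma$, and that the summand with $\lambda=\gamma$ contributes exactly $h_\gamma^q$.

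First we record the product rule for Hasse derivatives. Comparing coefficients of $T^\gamma$ in $(uv)(x+T)=u(x+T)\,v(x+T)$ gives
$$
 D_\gamma(uv)=\sum_{\nu+\rho=\gamma}D_\nu(u)\,D_\rho(v).
$$
We apply this with $u=h_\lambda^q$ and $v=x^\lambda$. By \cref{lem:Hasse-qpower}, $D_\nu(h_\lambda^q)=0$ unless $\nu=q\mu$ for some $\mu\in\NN^n$. Moreover, $D_\rho(x^\lambda)=0$ unless $\rho\le\lambda$ coordinatewise.

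Next we identify the surviving terms. A term is nonzero only if $\gamma=q\mu+\rho$ with $\rho\le\lambda$. Since $\gamma\in\Lambda_q$, every coordinate satisfies $\gamma_i<q$, which forces $\mu=0$ and $\rho=\gamma$. The only possible contribution is therefore $h_\lambda^q\,D_\gamma(x^\lambda)$, and this vanishes unless $\gamma\le\lambda$.

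Now we use maximality. If $\lambda\in\Supp_q(f)$ and $\gamma\le\lambda$, then maximality of $\gamma$ gives $\lambda=\gamma$. Hence every summand with $\lambda\ne\gamma$ is annihilated by $D_\gamma$. For $\lambda=\gamma$ we get $D_\gamma(x^\gamma)=\binom{\gamma}{\gamma}x^0=1$, so $D_\gamma(f)=h_\gamma^q$.

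The only step needing care is the product rule together with the vanishing from \cref{lem:Hasse-qpower}. The product rule follows immediately from the fact that $f\mapsto f(x+T)$ is a ring homomorphism, so I expect no real obstacle. The crucial point is $\gamma_i<q$, which rules out any contribution from derivatives falling on the $q$th power factor.
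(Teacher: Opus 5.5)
Your proposal is correct and follows essentially the same route as the paper. The Hasse product formula together with \cref{lem:Hasse-qpower} and the bound $\gamma_i<q$ forces the entire derivative onto $x^\lambda$; maximality of $\gamma$ then leaves only the summand $\lambda=\gamma$, where $D_\gamma(x^\gamma)=1$. Your justification of the product rule via the ring homomorphism $u\mapsto u(x+T)$ is a small addition that the paper takes for granted.
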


\begin{proof}
Fix $\lambda\in\Supp_q(f)$. By the Hasse product formula we have
$$
D_\gamma(h_\lambda^qx^\lambda)
=\sum_{\alpha+\beta=\gamma}
D_\alpha(h_\lambda^q)D_\beta(x^\lambda).
$$
By \cref{lem:Hasse-qpower}, the first factor can be nonzero only when
$\alpha\in q\NN^n$. But $0\le\alpha\le\gamma$ coordinatewise and
$\gamma_i<q$ for every $i$, so it must be $\alpha=0$ and consequently $\beta=\gamma$. Thus
$$
 D_\gamma(h_\lambda^qx^\lambda)
 =h_\lambda^qD_\gamma(x^\lambda).
$$
This is zero unless $\lambda\ge\gamma$ coordinatewise. By maximality of
$\gamma$ in $\Supp_q(f)$, the only nonzero summand for which this can occur is
$\lambda=\gamma$. Note that
$D_\gamma(x^\gamma)=1$. Summing over $\lambda$ gives
$D_\gamma(f)=h_\gamma^q$.
\end{proof}

\section{Strict growth of symbolic initial degrees}

In this section we will show the crucial fact that the initial sequence of an ideal of affine points is strictly increasing.
\begin{proposition}\label{prop:strict-growth}
Let $k$ be a perfect field of characteristic $p>0$, let $n\ge1$, and let $J$ be the radical
ideal of a nonempty finite set of $k$-rational points in $\Aff^n_k$. Then for
every integer $t\ge1$,
\begin{equation}\label{eq:strict-growth}
 a_t\ge a_{t-1}+1.
\end{equation}
More generally, for all integers $t\ge m\ge0$,
\begin{equation}\label{eq:linear-growth}
 a_t\ge a_m+t-m.
\end{equation}
\end{proposition}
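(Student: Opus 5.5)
We prove \eqref{eq:strict-growth} by strong induction on $t$; the inequality \eqref{eq:linear-growth} then follows by telescoping, $a_t-a_m=\sum_{j=m+1}^{t}(a_j-a_{j-1})\ge t-m$. For $t=1$ we need $a_1\ge 1=a_0+1$. This holds because $J$ is a proper ideal (the set of points is nonempty), so it contains no nonzero constant. Now let $t\ge2$ and assume $a_u\ge a_{u-1}+1$ for all $1\le u<t$. The assumption implies that $a_u$ is strictly increasing in $u$ for $u<t$, and hence $a_u\ge a_v+(u-v)$ for $0\le v\le u<t$.

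Choose $0\ne f\in J^{(t)}$ with $\deg f=a_t$. There are two cases.

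\emph{Case 1: some partial derivative $\partial f/\partial x_i$ is nonzero.} By \cref{lem:ordinary-order} we have $\partial f/\partial x_i\in J^{(t-1)}$, and its degree is at most $a_t-1$. Hence $a_{t-1}\le a_t-1$, as required.

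\emph{Case 2: all first partial derivatives of $f$ vanish.} By \cref{lem:pth-power} we can write $f=g^p$ with $g\in R$ nonzero. Since $f$ is nonzero and lies in a proper ideal, $f$ is nonconstant, so $g$ is nonconstant as well. By \cref{lem:order-frob} applied at each point, $g\in J^{(u)}$ with $u=\lceil t/p\rceil$. Because $t\ge2$, we have $1\le u<t$, and
\[
a_t=p\deg g\ge p\,a_u.
\]
It remains to show $p\,a_u\ge a_{t-1}+1$. Use the easy upper bound: if $h\in J^{(u)}$, then $h^p\in J^{(pu)}\subseteq J^{(t-1)}$, since $pu\ge t>t-1$. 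This gives $a_{t-1}\le p\,a_u$, which is one short of what we need. The gap is closed by comparing exponents. We have $a_{t-1}\le a_{pu-1}$ because symbolic powers decrease, and $a_{pu-1}$ is at most the degree of any element of $J^{(pu-1)}$. The task is therefore to exhibit an element of $J^{(pu-1)}$ of degree at most $p\,a_u-1$.

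To find it, take $h\in J^{(u)}$ of degree $a_u$ and consider $h^{p-1}\cdot h'$, where $h'$ is a minimal-degree element of $J^{(u-1)}$. This product lies in $J^{((p-1)u+u-1)}=J^{(pu-1)}$, and its degree is $(p-1)a_u+a_{u-1}\le (p-1)a_u+a_u-1=p\,a_u-1$. The inequality $a_{u-1}\le a_u-1$ is the induction hypothesis at level $u<t$. (If $u=1$, then $h'=1$ and the degree is $(p-1)a_1\le pa_1-1$ since $a_1\ge1$.) Combining,
\[
a_{t-1}\le a_{pu-1}\le p\,a_u-1\le a_t-1.
\]

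The main obstacle is Case 2. The naive bound $a_{t-1}\le p\,a_u$ is off by one, and the fix is to replace one factor of the $p$th power by an element of the previous symbolic power, which the induction hypothesis allows. Subadditivity, $J^{(a)}J^{(b)}\subseteq J^{(a+b)}$, is immediate from \eqref{eq:symbolic-affine} and is the only other ingredient needed.
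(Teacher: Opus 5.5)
Your proof is correct and follows the paper's argument: the same strong induction, the same split according to whether some $\partial f/\partial x_i$ is nonzero, the $p$th root $f=g^p$ with $g\in J^{(\lceil t/p\rceil)}$, and a product of a minimal-degree element of $J^{(u)}$ with one of $J^{(u-1)}$, controlled by the induction hypothesis at $u<t$. The only difference is cosmetic. The paper writes $t=p(u-1)+r$ and uses $G^{r-1}H^{p-r+1}\in J^{(t-1)}$ exactly, whereas you always take $G^{p-1}H\in J^{(pu-1)}\subseteq J^{(t-1)}$, which is the case $r=p$; this suffices because only the bound $\deg\le p\,a_u-1$ is needed.
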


\begin{proof}
We prove \eqref{eq:strict-growth} by induction on $t$.

For $t=1$, the ideal $J$ is proper, so it contains no nonzero constant.
Therefore $a_1\ge1=a_0+1$.

Now we assume that $t\ge2$ and that \eqref{eq:strict-growth} is known for every positive
integer strictly smaller than $t$. Let 
$f\in J^{(t)}$ be a nonzero polynomial with
$\deg f=a_t$.

There are two cases.

\smallskip
\textbf{Case 1.} There exists $1\leq i\leq n$ such that the first ordinary derivative $\partial f/\partial x_i$ of $f$ is nonzero. Then by \cref{lem:ordinary-order},
$\partial f/\partial x_i\in J^{(t-1)}$ and it must be
$$
a_{t-1}
\le\deg(\partial f/\partial x_i)
\le a_t-1.
$$
Hence $a_t\ge a_{t-1}+1$ in this case.

\smallskip
\textbf{Case 2.} All first ordinary derivatives of $f$ vanish.
Then by \cref{lem:pth-power}, there exists $g\in R$ such that $f=g^p$.
By \cref{lem:order-frob}, from 
$f=g^p\in J^{(t)}$ we obtain
$g\in J^{(u)}$,
with
$u=\left\lceil\frac{t}{p}\right\rceil$.
Therefore
\begin{equation}\label{eq:at-lower-pau}
 a_t=\deg f=p\deg g\ge p a_u.
\end{equation}
Conversely, if $G\in J^{(u)}$ is a nonzero polynomial of degree $a_u$, then
$G^p\in J^{(pu)}\subseteq J^{(t)}$,
because $pu\ge t$. Hence
$a_t\le p a_u$, which combined with \eqref{eq:at-lower-pau} gives
$a_t=p a_u$ and $u=\left\lceil t/p\right\rceil$.

There is a unique $1\le r\le p$ such that 
$t=p(u-1)+r$.
Since $t\ge2$, we have $u<t$. Thus the induction hypothesis applies at $u$ and yields
\begin{equation}\label{eq:induction-u}
 a_{u-1}\le a_u-1.
\end{equation}
We choose two nonzero polynomials:
$G\in J^{(u)}$ with $\deg G=a_u$
and, if $u\ge2$,
$H\in J^{(u-1)}$ with $\deg H=a_{u-1}$.
If $u=1$, we take $H=1$. Note that this is consistent with $J^{(0)}=R$ and $a_0=0$.

Let $F=G^{r-1}H^{p-r+1}$. Since $R$ is a domain, $F\neq0$. Then
(using $J^{(a)}J^{(b)}\subseteq J^{(a+b)}$), 
we have
$F\in J^{((r-1)u+(p-r+1)(u-1))}$.
The exponent can be simplified as follows:
\begin{align*}
 (r-1)u+(p-r+1)(u-1)
 &=pu-(p-r+1)\\
 &=p(u-1)+r-1\\
 &=t-1.
\end{align*}
Thus
$F\in J^{(t-1)}$ 
and consequently $a_{t-1}\le\deg F$.

On the other hand, by \eqref{eq:induction-u},
\begin{align*}
 \deg F
 &=(r-1)a_u+(p-r+1)a_{u-1}\\
 &\le (r-1)a_u+(p-r+1)(a_u-1)\\
 &=p a_u-(p-r+1)\\
 &\le p a_u-1\\
 &=a_t-1,
\end{align*}
so that $\deg F\le a_t-1$.

Therefore $a_t\ge a_{t-1}+1$ also in Case~2. This completes the induction.

The claim in \eqref{eq:linear-growth} follows immediately.
\end{proof}
At this point it is worth comparing our approach with that of H\`a--Sivakumar.
\begin{remark}\label{rem:HS-comparison}
Under the additional hypothesis
$p>\alpha(J^{(m+n-1)})$, H\`a and Sivakumar prove
$$
 \alpha(J^{(t)})\ge\alpha(J^{(m)})+t-m,
 \mbox{ with } m\le t\le m+n-1,
$$
by differentiating a minimum-degree polynomial; see
\cite[Lemma~3.7]{HaSivakumar2026}. Their lower bound on $p$ ensures that such
a polynomial has degree $<p$, so the case in which all first derivatives
vanish cannot occur for a nonconstant polynomial. Proposition
\ref{prop:strict-growth} handles exactly that omitted Frobenius case: when all
first derivatives vanish, one takes a $p$th root and reduces the symbolic
exponent from $t$ to $\lceil t/p\rceil<t$.
\end{remark}

\section{The Frobenius estimate in arbitrary positive characteristic}

We now rerun the Frobenius--Hasse argument of
\cite[Theorem~3.8]{HaSivakumar2026}, replacing their large-characteristic
growth lemma by \cref{prop:strict-growth}.

\begin{theorem}\label{thm:frob-estimate}
Let $k$ be a perfect field of characteristic $p>0$, let $n\ge1$, and let $J$ be the radical
ideal of a nonempty finite set of $k$-rational points in $\Aff^n_k$. We fix $m\ge1$ and let
$q=p^e$ be a power of $p$ with some $e\ge1$. Then for
$M=q(m+n-1)-(n-1)$ there is
\begin{equation}\label{eq:uniform-frob}
 \alpha(J^{(M)})
 \ge q\alpha(J^{(m)})+(q-1)(n-1).
\end{equation}
\end{theorem}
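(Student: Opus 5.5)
Let $f\in J^{(M)}$ be nonzero with $\deg f=\alpha(J^{(M)})$, and write it via the $q$-Frobenius decomposition of \cref{lem:frob-decomp} as $f=\sum_{\lambda\in\Lambda_q}h_\lambda^qx^\lambda$. Since $f\neq0$, $\Supp_q(f)$ is nonempty, so we may choose $\gamma$ maximal in $\Supp_q(f)$ for the coordinatewise order. By \cref{lem:isolate}, $D_\gamma(f)=h_\gamma^q\neq0$. By \cref{lem:Hasse-order}, $h_\gamma^q\in J^{(M-|\gamma|)}$ (if $M-|\gamma|\le 0$ the conclusion below is trivial anyway). Then \cref{lem:order-frob} gives $h_\gamma\in J^{(u)}$ with $u=\lceil (M-|\gamma|)/q\rceil$.

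For the degree comparison, note that the summands $h_\lambda^qx^\lambda$ have disjoint monomial supports, so $\deg f\ge \deg(h_\gamma^qx^\gamma)=q\deg h_\gamma+|\gamma|$. Write $c=|\gamma|$, so $0\le c\le n(q-1)$, and set $u=\lceil (M-c)/q\rceil$. Then
\[
\alpha(J^{(M)})\ge q\,a_u+c .
\]
Using $M=qm+(q-1)(n-1)$, we get $u=m+\lceil((q-1)(n-1)-c)/q\rceil$. The key step is to lower-bound $a_u$ using \cref{prop:strict-growth}, namely $a_u\ge a_m+(u-m)$ whenever $u\ge m$, and $a_u\ge a_m-(m-u)$ would be needed in the other direction. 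Since the sequence strictly increases, we have $a_m\le a_u+(m-u)$ when $u\le m$; this goes the wrong way, so I will treat $u<m$ with care.

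Let $j=u-m=\lceil((q-1)(n-1)-c)/q\rceil$, an integer with $j\ge \lceil -(q-1+ (q-1)\cdot 0)/q\rceil$. Since $c\le n(q-1)$, we have $(q-1)(n-1)-c\ge -(q-1)$, hence $j\ge 0$ and $u\ge m$. Therefore \eqref{eq:linear-growth} gives $a_u\ge a_m+j$, and it suffices to check the numerical inequality $qj+c\ge (q-1)(n-1)$. This holds because $qj\ge (q-1)(n-1)-c$ by definition of the ceiling. Combining everything yields $\alpha(J^{(M)})\ge q a_m+qj+c\ge q\alpha(J^{(m)})+(q-1)(n-1)$.

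The main point requiring care is the edge case $M-c\le 0$, or more generally $u$ small. Since $M\ge qm\ge q>0$ and the bound $j\ge0$ forces $u\ge m\ge1$, the edge case does not actually occur. I would also double-check the claim $\deg f\ge q\deg h_\gamma+|\gamma|$: it follows from the disjointness of supports, because the top-degree monomials of $h_\gamma^qx^\gamma$ survive in $f$. The crucial input replacing the large-$p$ hypothesis is exactly the use of \eqref{eq:linear-growth} for arbitrary $u-m=j\in[0,n-1]$.
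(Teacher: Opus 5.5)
Your proposal is correct and is essentially the paper's own argument: the $q$-Frobenius decomposition, a coordinatewise-maximal $\gamma$ with $D_\gamma(f)=h_\gamma^q$, the descent $h_\gamma\in J^{(u)}$ with $u=\lceil (M-|\gamma|)/q\rceil$, the linear growth $a_u\ge a_m+(u-m)$, and the ceiling estimate $q(u-m)+|\gamma|\ge (q-1)(n-1)$. Writing $M=qm+(q-1)(n-1)$ gives $u\ge m$ at once, so your hedging about the case $u<m$ is unnecessary; likewise the upper bound $u\le m+n-1$, which the paper also derives, is not needed, since $a_t\ge a_m+t-m$ holds for every $t\ge m$.
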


\begin{proof}
Let $f\in J^{(M)}$ be a nonzero polynomial of minimal total degree 
$\deg f=\alpha(J^{(M)})$.

By \cref{lem:frob-decomp}, $f$ can be written uniquely in the form
\begin{equation}\label{eq:f-decomp-proof}
f=\sum_{\lambda\in\Lambda_q}h_\lambda^q x^\lambda.
\end{equation}
Let $\gamma\in\Supp_q(f)$ be a coordinatewise maximal element in $\Supp_q(f)$.

By \cref{lem:isolate}, 
$D_\gamma(f)=(h_\gamma)^q$.
Since $f\in J^{(M)}$, \cref{lem:Hasse-order} implies
$$
 (h_\gamma)^q=D_\gamma(f)
 \in J^{(M-|\gamma|)}.
$$
By \cref{lem:order-frob},
$h_\gamma\in\mm_i^t$
for every $i$, where
\begin{equation}\label{eq:t-definition}
t=\left\lceil\frac{M-|\gamma|}{q}\right\rceil.
\end{equation}
Thus $h_\gamma\in J^{(t)}$.

We next estimate $t$. Since $\gamma\in\Lambda_q$,
$|\gamma|\le n(q-1)$. Using the definition of $M$ we obtain
\begin{align*}
 M-|\gamma|
 &\ge q(m+n-1)-(n-1)-n(q-1)\\
 &=q(m-1)+1.
\end{align*}
Therefore
$$
 t\ge
 \left\lceil m-1+\frac1q\right\rceil
 =m.
$$
On the other hand, $|\gamma|\ge0$, so
$$
 t\le
 \left\lceil\frac{M}{q}\right\rceil
 =\left\lceil
 m+n-1-\frac{n-1}{q}
 \right\rceil
 \le m+n-1.
$$
Thus
\begin{equation}\label{eq:t-range}
 m\le t\le m+n-1.
\end{equation}

Since $h_\gamma\in J^{(t)}$, inequality $\eqref{eq:t-range}$ allows us to apply \eqref{eq:linear-growth} and we get
\begin{equation}\label{eq:degree-hgamma}
 \deg h_\gamma
 \ge\alpha(J^{(t)})
 \ge\alpha(J^{(m)})+t-m.
\end{equation}

The summands in \eqref{eq:f-decomp-proof} have pairwise disjoint monomial
supports, because their exponent vectors lie in distinct residue classes
modulo $q$. Hence no monomial cancellation can occur between two different
summands, and in particular
\begin{equation}\label{eq:degree-f-component}
 \deg f
 \ge\deg(h_\gamma^qx^\gamma)
 =q\deg h_\gamma+|\gamma|.
\end{equation}
Combining \eqref{eq:degree-hgamma} and
\eqref{eq:degree-f-component},
\begin{align*}
 \deg f
 &\ge q\alpha(J^{(m)})+q(t-m)+|\gamma|.
\end{align*}
By the definition \eqref{eq:t-definition} of $t$,
$qt\ge M-|\gamma|$ and thus
\begin{align*}
 q(t-m)+|\gamma|
 &\ge M-qm\\
 &=q(m+n-1)-(n-1)-qm\\
 &=(q-1)(n-1).
\end{align*}
We conclude that
$$
 \deg f
 \ge q\alpha(J^{(m)})+(q-1)(n-1).
$$
Since $\deg f=\alpha(J^{(M)})$, this is exactly
\eqref{eq:uniform-frob}.
\end{proof}

\begin{remark}
The only change from the proof of \cite[Theorem~3.8]{HaSivakumar2026} is the
replacement of \cite[Lemma~3.7]{HaSivakumar2026} by
\cref{prop:strict-growth}. In particular, the Frobenius decomposition, the
maximal-support Hasse derivative, the range
$m\le t\le m+n-1$, and the final degree computation are the same mechanisms
as in that preprint.
\end{remark}

\section{Demailly's inequality in characteristic $p$}

\begin{corollary}[Affine Demailly inequality]\label{cor:affine-demailly}
Let $k$ be a perfect field of characteristic $p>0$, let $n\ge1$, and let $J$ be the radical
ideal of a nonempty finite set of $k$-rational points in $\Aff^n_k$. Then for
every $m\ge1$,
$$
 \widehat\alpha(J)
 \ge
 \frac{\alpha(J^{(m)})+n-1}{m+n-1}.
$$
\end{corollary}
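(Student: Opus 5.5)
The plan is to derive the corollary directly from \cref{thm:frob-estimate} by letting $q=p^e$ tend to infinity along powers of $p$. Fix $m\ge1$ and, for each $e\ge1$, set $q=p^e$ and $M_e=q(m+n-1)-(n-1)$. Since $n\ge1$ and $q\ge2$, we have $M_e\ge q m\ge 1$, and $M_e\to\infty$ as $e\to\infty$. By \eqref{eq:uniform-frob},
\[
\frac{\alpha(J^{(M_e)})}{M_e}\ \ge\ \frac{q\,\alpha(J^{(m)})+(q-1)(n-1)}{q(m+n-1)-(n-1)}.
\]

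The next step is to pass to the limit on both sides. On the left, $\widehat\alpha(J)$ is defined as $\lim_{t\to\infty}a_t/t$. For this limit to exist in the affine setting, I will use \cref{prop:dehom}: after choosing $k$ infinite and a linear change of coordinates, $a_t=\alpha(I(X)^{(t)})$ for a finite set $X$ of points in $\PP^n_k$, so subadditivity and Fekete's lemma apply as in the introduction. Alternatively, subadditivity can be checked directly, since $J^{(a)}J^{(b)}\subseteq J^{(a+b)}$ in $R$ by \eqref{eq:symbolic-affine}. In the latter case Fekete gives $\lim a_t/t=\inf a_t/t$, which is enough for the argument. Because $(M_e)$ is a subsequence tending to infinity, the left-hand side converges to $\widehat\alpha(J)$. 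On the right, divide the numerator and denominator by $q$:
\[
\frac{\alpha(J^{(m)})+(n-1)(1-1/q)}{(m+n-1)-(n-1)/q}\ \longrightarrow\ \frac{\alpha(J^{(m)})+n-1}{m+n-1}.
\]
Passing to the limit in the inequality then gives the claim.

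The only step needing real care is the existence of the limit defining $\widehat\alpha(J)$ for affine, possibly non-homogeneous, ideals. I will handle it with the subadditivity $a_{s+t}\le a_s+a_t$, which follows from the containment above and additivity of total degree on products of nonzero polynomials in the domain $R$. If one prefers to bypass limits entirely, Fekete's lemma gives $\widehat\alpha(J)=\inf_t a_t/t$. In that case it suffices to note that the right-hand sides above approach the target value, so any slack in the direction of the inequality costs nothing. Everything else is a direct computation.
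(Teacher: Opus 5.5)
Your proof is correct and follows the paper's argument: apply \eqref{eq:uniform-frob} with $q=p^e$, divide by $M_e$, and let $e\to\infty$ along this subsequence. Your direct subadditivity check $a_{s+t}\le a_s+a_t$ is a useful addition, since it justifies the existence of $\widehat\alpha(J)$, which the paper takes for granted; prefer it over the appeal to \cref{prop:dehom}, which assumes $k$ infinite. Drop the closing aside about bypassing limits, though: $\widehat\alpha(J)=\inf_t a_t/t$ only yields $\widehat\alpha(J)\le a_{M_e}/M_e$, which is the wrong direction for a lower bound, so the argument really does need the convergence of $a_{M_e}/M_e$ to $\widehat\alpha(J)$.
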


\begin{proof}
For $e\ge1$, put $q_e=p^e$ and let 
$M_e=q_e(m+n-1)-(n-1)$.
By \cref{thm:frob-estimate},
\begin{equation}\label{eq:ratio-e}
 \frac{\alpha(J^{(M_e)})}{M_e}
 \ge
 \frac{
 q_e\alpha(J^{(m)})+(q_e-1)(n-1)
 }{
 q_e(m+n-1)-(n-1)
 }.
\end{equation}
As $e\to\infty$, one has $M_e\to\infty$. Since the sequence
$\alpha(J^{(r)})/r$
converges to $\widehat\alpha(J)$, the subsequence indexed by $M_e$ has the same limit. Therefore the left-hand side of \eqref{eq:ratio-e}
converges to $\widehat\alpha(J)$. The right-hand side converges to
$$
 \frac{\alpha(J^{(m)})+n-1}{m+n-1}.
$$
So passing to the limit proves the assertion.
\end{proof}
We conclude by proving our main result.
\begin{proof}[Proof of \cref{thm:main}]
Let $X\subset\PP^n_k$ be as in \cref{thm:main}. Since $k$ is algebraically
closed, it is infinite and perfect. Choosing a hyperplane avoiding $X$ and using
\cref{prop:dehom} we obtain an affine point ideal $J\subset k[y_1,\dots,y_n]$
such that, for every $r\ge1$,
$$
 \alpha(I(X)^{(r)})=\alpha(J^{(r)}),
$$
and therefore
$$
 \widehat\alpha(I(X))=\widehat\alpha(J).
$$
Applying \cref{cor:affine-demailly} to $J$:
$$
 \widehat\alpha(I(X))
 =\widehat\alpha(J)
 \ge
 \frac{\alpha(J^{(m)})+n-1}{m+n-1}
 =
 \frac{\alpha(I(X)^{(m)})+n-1}{m+n-1}
$$
proves \cref{thm:main}.
\end{proof}
\section*{Funding}
Piotr Pokora is supported by the National Science Centre (Poland) Sonata Bis Grant 
\[\textbf{2023/50/E/ST1/00025.} \] For the purpose of Open Access, the author has applied a CC-BY public copyright license to any Author Accepted Manuscript (AAM) version arising from this submission.

%\bibliographystyle{abbrv}
%\bibliography{dem}

\medskip 

\noindent
Piotr Pokora, Tomasz Szemberg\\
\noindent
Department of Mathematics,
University of the National Education Commission Krakow,
Podchor\c a\.zych 2,
PL-30-084 Krak\'ow, Poland. \\
Email: \url{piotr.pokora@uken.krakow.pl}, \url{tomasz.szemberg@gmail.com}.
%\begin{thebibliography}{99}
%\end{thebibliography}

\end{document}